\definecolor{e-mail}{rgb}{0,.40,.80}
\definecolor{reference}{rgb}{.20,.60,.22}
\definecolor{citation}{rgb}{0,.40,.80}
\newtheorem{thm}{Theorem}
\newtheorem{cor}[thm]{Corollary}
\newtheorem{lem}[thm]{Lemma}
\theoremstyle{definition}
\newtheorem{defn}[thm]{Definition}
\theoremstyle{remark}
\newtheorem{rem}[thm]{Remark}
\numberwithin{thm}{section}
\theoremstyle{definition}
\theoremstyle{definition}
\theoremstyle{definition}
\numberwithin{equation}{section}
 \title[Computing the unipotent radical of a $\mathrm{PPV}$-group]{Computation of the unipotent radical of the differential Galois group for a parameterized second-order linear differential equation}
 \author{Carlos E. Arreche}
\email{carreche@gc.cuny.edu}
\address{Mathematics Department, The Graduate Center of the City University of New York, New York, NY 10016}
 \keywords{Parameterized Picard-Vessiot theory, parameterized linear differential equation, linear differential algebraic group, unipotent radical, creative telescoping.}
 \subjclass[2010]{Primary 12H20, 34M15; Secondary 34M03, 20H20, 13N10, 33F10, 37K20}
 \thanks{This material is based upon work partially supported by a National Science Foundation (NSF) Graduate Research Fellowship (grant 40017-04-05) and by NSF grant CCF-0952591.}
\begin{document}


\begin{abstract} We propose a new method to compute the unipotent radical $R_u(H)$ of the differential Galois group $H$ associated to a parameterized second-order homogeneous linear differential equation of the form \[\tfrac{\partial^2}{\partial x^2}Y-qY=0,\] where $q \in F(x)$ is a rational function in $x$ with coefficients in a $\Pi$-field $F$ of characteristic zero, and $\Pi$ is a commuting set of parametric derivations. The procedure developed by Dreyfus reduces the computation of $R_u(H)$ to solving a creative telescoping problem, whose effective solution requires the assumption that the maximal reductive quotient $H / R_u(H)$ is a $\Pi$-constant linear differential algebraic group. When this condition is not satisfied, we compute a new set of parametric derivations $\Pi'$ such that the associated differential Galois group $H'$ has the property that $H'/ R_u(H')$ is $\Pi'$-constant, and such that $R_u(H)$ is defined by the same differential equations as $R_u(H')$. Thus the computation of $R_u(H)$ is reduced to the effective computation of $R_u(H')$. We expect that an elaboration of this method will be successful in extending the applicability of some recent algorithms developed by Minchenko, Ovchinnikov, and Singer to compute unipotent radicals for higher order equations.\end{abstract}

\maketitle


\section{Introduction} Consider a linear differential equation of the form \begin{equation}\label{equation}\delta_x^nY+\sum_{i=0}^{n-1}r_i\delta_x^iY=0,\end{equation} where $r_i\in K:=F(x)$, the field of rational functions in $x$ with coefficients in a $\Pi$-field $F$, $\delta_x$ denotes the derivative with respect to $x$, and $\Pi:=\{\partial_1,\dots,\partial_m\}$ is a set of commuting derivations. Letting $\Delta:=\{\delta_x\}\cup\Pi$, consider $K$ as a $\Delta$-field by setting $\partial_j x=0$ for each $j$. The parameterized Picard-Vessiot theory developed in \cite{cassidy-singer:2006} associates a parameterized Picard-Vessiot ($\mathrm{PPV}$) group to such an equation. In analogy with classical Galois theory and the Picard-Vessiot theory developed by Kolchin \cite{kolchin:1948}, the $\mathrm{PPV}$-group measures the $\Pi$-algebraic relations amongst the solutions to \eqref{equation}. The differential Galois groups that arise in this theory are linear differential algebraic groups, which are the differential-algebraic analogues of linear algebraic groups: they are subgroups of $\mathrm{GL}_n$ that are defined by the vanishing of systems of polynomial differential equations in the matrix entries. The study of linear differential algebraic groups was pioneered in \cite{cassidy:1972}. The parameterized Picard-Vessiot theory of \cite{cassidy-singer:2006} is a special case of an earlier generalization of Kolchin's theory, which is developed in \cite{landesman:2008}, as well as the differential Galois theory for difference-differential equations with parameters presented in \cite{hardouin-singer:2008}. 

We propose a new method to compute the unipotent radical $R_u(H)$ of the $\mathrm{PPV}$-group $H$ corresponding to a second-order parameterized linear differential equation of the following form \begin{equation}\label{original}\delta_x^2Y-qY=0,\end{equation} where $q\in F(x)=:K$, and $F$ is a $\Pi$-field. In \cite{dreyfus:2011}, Dreyfus applies results from \cite{dreyfus-thesis} to develop algorithms to compute $H$ (see also \cite{arreche:2012} for a detailed discussion of Dreyfus' results in the setting of one parametric derivation). This algorithm is extended to arbitrary second-order linear differential equations over $K$ in \cite{arreche-gl2}. In \cite[\S2.1]{dreyfus:2011}, the computation of $R_u(H)$ is reduced to the computation of a finite set of linear differential operators in $F[\Pi]$, and a method is given for their computation which is known to halt under the assumption that the maximal reductive quotient $H/R_u(H)$ is $\Pi$\emph{-constant} (cf. \cite[Alg.~1]{minchenko-ovchinnikov-singer:2013a}).

We circumvent this obstruction by modifying the set of parametric derivations. Letting $\mathcal{D}:= F\cdot\Pi$ denote the $F$-vector space spanned by $\Pi$, we compute a commuting $F$-basis $\Pi'$ for the Lie subspace $\mathcal{L}\subseteq\mathcal{D}$ consisting of derivations $\partial'\in\mathcal{D}$ such that the matrix entries of $H/R_u(H)$ are $\partial'$-constant. We let the $\Pi'$-linear differential algebraic group $H'$ denote the $\mathrm{PPV}$-group of \eqref{unimodular}, obtained by replacing $\Pi$ with the new set of parametric derivations $\Pi'$ throughout the previous discussion. Theorem~\ref{main1} states that the maximal reductive quotient $H'/R_u(H')$ is $\Pi'$-constant, and that $R_u(H)$ is defined by the same differential equations as $R_u(H')$. Heuristically, for the purposes of computing $R_u(H)$ one may safely disregard any derivation $\partial\in\mathcal{D}$ that doesn't ``think'' that $H/R_u(H)$ is differentially constant. Thus the computation of $R_u(H)$ is reduced to the effective computation of $R_u(H')$ prescribed by Dreyfus' procedure. This has the consequence, counterintuitive in light of the fact that the known algorithms \cite{dreyfus:2011,chen-singer-etal:2014,minchenko-ovchinnikov-singer:2013a} for computing $R_u(H)$ all require that $H/R_u(H)$ be $\Pi$-constant, that the computation of $R_u(H)$ should be \emph{easier} to perform when $H/R_u(H)$ fails to be $\Pi$-constant, since in this case there are less derivations appearing in the defining equations for $R_u(H)$.

In \S\ref{examples}, we apply our main result (Theorem~\ref{main1}) to compute the $\mathrm{PPV}$-group of a concrete parameterized linear differential equation \eqref{eg1} over $K$. An application of Theorem~\ref{main1} already appears in \cite{arreche:2013}. Consider the $\mathrm{PPV}$-group $G_\gamma$ associated to the \emph{incomplete Gamma function}, which is defined by $\gamma(t,x):=\int_0^xs^{t-1}e^{-s}ds$ for $\mathrm{Re}(t)>0$, and extended analytically to a multivalued meromorphic function on $\mathbb{C}\times\mathbb{C}$. It satisfies: \begin{equation}\label{gamma}\frac{\partial^2\gamma}{\partial x^2}-p\frac{\partial\gamma}{\partial x}=0,\end{equation} where $p:=\frac{1-t-x}{x}$. This is an example of a parameterized linear differential equation over $\mathbb{C}(x,t)$, where $\delta_x:=\frac{\partial}{\partial x}$, and $\Pi:=\{\frac{\partial}{\partial t}\}$ is the parametric derivation. In \cite{johnson:1995} it was shown that $\gamma(t,x)$ does not satisfy any polynomial differential equations over $\mathbb{C}(x,t,e^x,\log (x))$ with respect to $\frac{\partial}{\partial t}$. This result was necessary in \cite[Ex.~7.2]{cassidy-singer:2006} for the computation of $G_\gamma$; specifically, to conclude that $R_u(G_\gamma)=\mathbb{G}_a$, the additive group. In \cite[Thm.~3.2]{arreche:2013}, whose proof follows the ideas of \cite[Cor.~3.4.1]{hardouin-singer:2008}, the differential transcendence of the solutions to \eqref{gamma} was characterized in terms of two conditions on the coefficient $p$. Letting $G$ (resp., $G'$) denote the $\mathrm{PPV}$-group (resp., the non-parameterized $\mathrm{PV}$-group) corresponding to \eqref{gamma} for an arbitrary $p\in \mathbb{C}(x,t)$, the first condition is equivalent to the statement that $G/R_u(G)$ is not $\frac{\partial}{\partial t}$-constant, and the second condition says that $R_u(G')=\mathbb{G}_a$. When the first condition is satisfied, it is proved in \cite[Lem. 3.6(2)]{hardouin-singer:2008} that $R_u(G)$ is either $0$ or $\mathbb{G}_a$. Since $G$ is Zariski-dense in $G'$ by \cite[Prop. 3.6(2)]{cassidy-singer:2006}, it follows that $R_u(G)=\mathbb{G}_a$ precisely when $R_u(G')=\mathbb{G}_a$. In other words, whenever $G/R_u(G)$ is not $\frac{\partial}{\partial t}$-constant, then $R_u(G)$ is defined by the same (non-differential) equations as $R_u(G')$. Since $G'$ is the ``$\mathrm{PPV}$-group'' for \eqref{gamma} obtained by replacing $\Pi=\{\frac{\partial}{\partial t}\}$ with $\Pi'=\emptyset$, Theorem~\ref{main1} includes \cite[Thm.~3.2]{arreche:2013} as a special case. In fact, the strategy for the proof of Theorem~\ref{main1} is similar to the one followed in \cite[Thm.~3.2]{arreche:2013}. But in order to carry out this strategy in the more general setting of several parametric derivations, it is necessary to strengthen some of the technical results used in the proof of \cite[Thm.~3.2]{arreche:2013}. This is done in Lemma~\ref{surj-res1} and Lemma~\ref{surj-res2} (see also Remark~\ref{surj1-rem} and Remark~\ref{surj1-rem}).

An algorithm to compute the unipotent radical $R_u(G)$ of the $\mathrm{PPV}$-group $G$ corresponding to an $n^\text{th}$-order parameterized linear differential equation over $K$ is given in \cite{minchenko-ovchinnikov-singer:2013a}, under the familiar assumption that $G/R_u(G)$ is $\Pi$-constant. We expect that an elaboration of the methods presented in this paper will be successful in extending the procedure of \cite{minchenko-ovchinnikov-singer:2013a} to compute $R_u(G)$ in cases where $G/R_u(G)$ is not necessarily $\Pi$-constant. Algorithms to compute \emph{telescopers} for rational functions, algebraic functions, and hyperexponential functions, are given in \cite{chen-singer:2012}, \cite{chen-kauers-singer:2012}, and \cite{chen-etal:2013}, respectively. The notion of \emph{parallel telescoping} investigated in \cite{chen-singer-etal:2014} leads to algorithms \cite[\S5]{chen-singer-etal:2014} to compute $\mathrm{PPV}$-groups in the setting of several principal derivations and one parametric derivation (see \S\ref{ppv-intro}). The creative telescoping problems solved by these algorithms lie at the core of the algorithms presented in \cite{dreyfus:2011} (see also \cite{arreche:2012}) to compute the $\mathrm{PPV}$-group of \eqref{original}, where the computation of $R_u(H)$ is reduced to solving a creative telescoping problem. The hyper-exponential assumption, which is necessary to apply the algorithms of \cite{chen-kauers-singer:2012,chen-singer-etal:2014}, coincides in this case with the requirement that $H/R_u(H)$ be $\Pi$-constant (see \cite[\S4]{chen-singer-etal:2014} and Remark~\ref{piprime}). We refer to \cite[\S1]{chen-singer-etal:2014} for more details and references concerning the connection between creative telescoping problems and the computation of $\mathrm{PPV}$-groups.


\section{Preliminaries}\label{preliminaries}
See \cite{kolchin:1976,vanderput-singer:2003} for more details concerning the following definitions. A $\Delta$\emph{-ring} is a ring $A$ equipped with a finite set $\Delta:=\{\delta_1,\dots,\delta_m\}$ of commuting derivations (that is, $\delta_i(ab)=a\delta_i (b)+ \delta_i( a)b$ and $\delta_i\delta_j=\delta_j\delta_i$ for each $a,b\in A$ and $1\leq i,j \leq m$). We often omit the parentheses, and write $\delta a$ for $\delta(a)$. For $\Pi\subseteq\Delta$, we denote the subring of $\Pi$\emph{-constants} of $A$ by $A^\Pi:=\{a\in K \ | \ \delta a=0, \ \delta\in\Pi\}$. When $\Pi=\{\delta\}$ is a singleton, we write $A^\delta$ instead of $A^\Pi$. If $A=K$ happens to be a field, we say that $(K,\Delta)$ is a $\Delta$-field. Every field is assumed to be of characteristic zero.

The \emph{ring of differential polynomials} over $K$ (in $m$ differential indeterminates) is denoted by $K\{Y_1,\dots,Y_m\}_\Delta$. Algebraically, it is the free $K$-algebra in the countably infinite set of variables $\{\theta Y_i \ | \ 1\leq i \leq m, \ \theta\in\Theta\}$, where $$\Theta:=\{\delta_1^{r_1}\dots\delta_n^{r_n} \ | \ r_i\in\mathbb{Z}_{\geq 0} \ \ \text{for} \ \ 1\leq i \leq n\}$$ is the free commutative monoid on the set $\Delta$. The ring $K\{Y_1,\dots,Y_m\}_\Delta$ carries a natural structure of $\Delta$-ring, given by $\delta_i(\theta Y_j):=(\delta_i\cdot\theta)Y_j$.

We say $\mathbf{p}\in K\{Y_1,\dots,Y_m\}_\Delta$ is a \emph{linear differential polynomial} if it belongs to the $K$-vector space spanned by the $\theta Y_j$, for $\theta\in\Theta$ and $1\leq j \leq m$. The $K$-vector space of linear differential polynomials will be denoted by $K\{Y_1,\dots,Y_m\}_\Delta^1$.

The \emph{ring of linear differential operators} $K[\Delta]$ is the $K$-span of $\Theta$. Its (non-commutative) ring structure is defined by composition of additive endomorphisms of $K$. The canonical identification of (left) $K$-vector spaces $K[\Delta]\simeq K\{Y\}_{\Delta}^1$ given by $\sum_\theta a_\theta\theta\leftrightarrow\sum_\theta a_\theta\theta Y$ will be assumed implicitly in what follows.

If $M$ is a $\Delta$-field and $K$ is a subfield such that $\delta(K)\subset K$ for each $\delta\in \Delta$, we say $K$ is a $\Delta$\emph{-subfield} of $M$ and $M$ is a $\Delta$\emph{-field extension} of $K$. If $y_1,\dots,y_n\in M$, we denote the $\Delta$-subfield of $M$ generated over $K$ by all the derivatives of the $y_i$ by $$K\langle y_1,\dots,y_n\rangle_\Delta\subseteq M.$$

A $\Delta$-field $K$ is $\Delta$\emph{-closed} if every system of polynomial differential equations defined over $K$ that admits a solution in some $\Delta$-field extension of $K$ already has a solution in $K$. This notion is discussed at length in \cite{kolchin:1974} (see also \cite{cassidy-singer:2006, trushin:2010}).


\subsection{Linear differential algebraic groups and parameterized Picard-Vessiot theory} \label{ppv-intro}
We recall some standard facts from the parameterized Picard-Vessiot theory \cite{cassidy-singer:2006} (see also \cite{gill-gor-ov:2012, landesman:2008,hardouin-singer:2008}) and the theory of linear differential algebraic groups \cite{cassidy:1972} (see also \cite{kolchin:1985, ovchinnikov:2009}). Let $F$ be a $\Pi$-field, where $\Pi:=\{\partial_1,\dots,\partial_m\}$, and let $K:=F(x)$ be the field of rational functions in $x$ with coefficients in $F$. Let $\Delta:=(\{\delta_x\}\cup\Pi)$, and consider $K$ as a $\Delta$-field by setting $\delta_xx=1$, $K^{\delta_x}=F$, and $\partial_ix=0$ for each $i$. We will sometimes refer to $\delta_x$ as the \emph{main} derivation, and to $\Pi$ as the set of \emph{parametric} derivations. Consider the following linear differential equation with respect to the main derivation, where $r_i\in K$ for each $0\leq i\leq n-1$:\begin{equation}\label{ppv-eq} \delta_x^nY+\sum_{i=0}^{n-1}r_i\delta_x^iY=0.\end{equation}

\begin{defn}  A $\Delta$-field extension $M\supseteq K$ is a \emph{parameterized Picard-Vessiot extension} (or $\mathrm{PPV}$-extension) of $K$ for~\eqref{ppv-eq} if:
\begin{enumerate}[(i)]
\item There exist $n$ distinct, $F$-linearly independent elements $y_1,\dots, y_n\in M$ such that $\delta_x^ny_j+\sum_ir_i\delta_x^iy_j=0$ for each $1\leq j \leq n$.
\item $M=K\langle y_1,\dots y_n\rangle_\Delta$.
\item $M^{\delta_x}=K^{\delta_x}$.
\end{enumerate}

We define the \emph{parameterized Picard-Vessiot group} (or $\mathrm{PPV}$-group) as the group of $\Delta$-automorphisms of $M$ over $K$, and we denote it by $\mathrm{Gal}_\Delta(M/K)$. The $F$-linear span of all the $y_j$ is the \emph{solution space} $\mathcal{S}$. \end{defn} 

If $F$ is $\Pi$-closed, it is shown in \cite{cassidy-singer:2006} that a $\mathrm{PPV}$-extension and $\mathrm{PPV}$-group for \eqref{ppv-eq} over $K$ exist and are unique up to $K$-$\Delta$-isomorphism. Although this assumption allows for a simpler exposition of the theory, several authors \cite{gill-gor-ov:2012, wibmer:2011} have shown that, in many cases of practical interest, the parameterized Picard-Vessiot theory can be developed without assuming that $F$ is $\Pi$-closed. In any case, we may always embed $F$ in a $\Pi$-closed field \cite{kolchin:1974,trushin:2010}.

The action of $\mathrm{Gal}_\Delta(M/K)$ is determined by its restriction to $\mathcal{S}$, which defines an embedding $\mathrm{Gal}_\Delta(M/K)\hookrightarrow\mathrm{GL}_n(F)$ after choosing an $F$-basis for $\mathcal{S}$. It is shown in \cite{cassidy-singer:2006} that this embedding identifies the $\mathrm{PPV}$-group with a linear differential algebraic group.

\begin{defn}[\protect{\cite{cassidy:1972,kolchin:1985}}] \label{ldag-def}Let $F$ be a $\Pi$-closed field. We say that a subgroup $G\subseteq \mathrm{GL}_n(F)$ is a \emph{linear differential algebraic group} if $G$ is defined as a subset of $\mathrm{GL}_n(F)$ by the vanishing of a system of polynomial differential equations in the matrix entries, with coefficients in $F$. We say that $G$ is $\Pi$\emph{-constant} if it is conjugate to a subgroup of $\mathrm{GL}_n(F^\Pi)$.\end{defn}

The study of linear differential algebraic groups was pioneered in \cite{cassidy:1972}, where the differential algebraic subgroups of the additive group $\mathbb{G}_a(F)$ and the multiplicative group $\mathbb{G}_m(F)$ were classified in terms of finite sets of linear differential operators (see \cite[Prop.~11, Prop.~31 and its Corollary]{cassidy:1972}). The differential algebraic subgroups of $\mathrm{SL}_2(F)$ were classified in \cite{sit:1975}. When the $\mathrm{PPV}$-group $\mathrm{Gal}_\Delta(M/K)$ is $\Pi$-constant, it is proved in \cite[Prop.~3.9(1)]{cassidy-singer:2006} that \eqref{ppv-eq} is \emph{completely integrable} \cite[Defn.~3.8]{cassidy-singer:2006}, and that $M$ is a Picard-Vessiot-extension (or $\mathrm{PV}$-extension) of $K$ for \eqref{ppv-eq}, in the non-parameterized sense of \cite{kolchin:1948}. The algorithms developed in \cite{gor-ov:2012} drastically reduce the number of conditions that one has to check in order to decide whether \eqref{ppv-eq} is completely integrable.

There is a \emph{parameterized Galois correspondence} between the differential algebraic subgroups $\Gamma$ of $\mathrm{Gal}_\Delta(M/K)$ and the intermediate $\Delta$-fields $K\subseteq L\subseteq M$, given by $\Gamma\mapsto M^\Gamma$ and $L\mapsto \mathrm{Gal}_\Delta(M/L)$. Under this correspondence, an intermediate $\Delta$-field $L$ is a $\mathrm{PPV}$-extension of $K$ (for some linear differential equation with respect to $\delta_x$) if and only if $\mathrm{Gal}_\Delta(M/L)$ is normal in $\mathrm{Gal}_\Delta(M/K)$; in which case the homomorphism $\mathrm{Gal}_\Delta(M/K)\twoheadrightarrow\mathrm{Gal}_\Delta(L/K)$, defined by $\sigma\mapsto\sigma|_L$, is surjective with kernel $\mathrm{Gal}_\Delta(M/L)$. See \cite[Thm. 3.5]{cassidy-singer:2006} and \cite[\S8.1]{gill-gor-ov:2012} for more details.


\section{Main result} \label{unipotent} 

We let $K:=F(x)$ denote the $\Delta$-field defined in \S\ref{ppv-intro}: $F=K^{\delta_x}$ is $\Pi$-closed field, $\delta_xx=1$, $\partial x=0$ for each $\partial\in\Pi$, and $\Delta:=\{\delta_x\}\cup\Pi$. Consider a second-order parameterized linear differential equation \begin{equation}\label{unimodular} \delta_x^2Y-qY=0,\end{equation} where $q\in K$. Let $M$ be a $\mathrm{PPV}$-field of $K$ for \eqref{unimodular}, and denote by $H:=\mathrm{Gal}_\Delta(M/K)$ the corresponding $\mathrm{PPV}$-group.


\subsection{Dreyfus' algorithm} \label{dreyfus} In \cite{dreyfus:2011} (see also \cite{arreche:2012}), Dreyfus develops a procedure to compute the $\mathrm{PPV}$-group $H$ corresponding to \eqref{unimodular}. We begin with a brief summary of this procedure in the non-reductive case (see \cite{minchenko-ovchinnikov-singer:2013a,minchenko-ovchinnikov-singer:2013b}).

When $H$ is not reductive \cite[Defn.~2.2.6]{minchenko-ovchinnikov-singer:2013b}, it is proved in \cite{kovacic:1986} that there exists an $F$-basis $\{\eta,\xi\}$ for the solution space $\mathcal{S}$ of \eqref{unimodular} such that $\delta_x\eta=u\eta$ for some $u\in K$, and $\label{relation2}\delta_x\bigl(\tfrac{\xi}{\eta}\bigr)=\eta^{-2}$. The embedding $H\hookrightarrow\mathrm{SL}_2(F)$ is defined by \begin{align*}\sigma(\eta) &=a_\sigma\eta \\ \sigma(\xi) & =a_\sigma^{-1}\xi+b_\sigma\eta,\end{align*} and there exist differential algebraic subgroups $A\leq \mathbb{G}_m(F)$ and $B\leq \mathbb{G}_a(F)$ such that the image of $H\hookrightarrow \mathrm{SL}_2(F)$ determined by this choice of basis \begin{equation}\label{borel} \left\{\begin{pmatrix} a_\sigma & b_\sigma \\ 0 & a_\sigma^{-1}\end{pmatrix} \ \middle| \ \sigma\in H\right\}= \left\{\begin{pmatrix} a & b \\ 0 & a^{-1}\end{pmatrix} \ \middle| \ a\in A, \ b\in B\right\}.\end{equation} If we let $L$ denote the $\mathrm{PPV}$-field corresponding to \begin{equation}\label{reductive}\delta_xY-uY=0,\end{equation} then the unipotent radical (see [Defn.~2.2.5]\cite{minchenko-ovchinnikov-singer:2013b}) $B\simeq R_u(H)$ coincides with the $\mathrm{PPV}$-group $\mathrm{Gal}_\Delta(M/L)$, and the maximal reductive quotient $A\simeq H/R_u(H)$ is naturally isomorphic to $\mathrm{Gal}_\Delta(L/K)$.

We refer to \cite{dreyfus:2011} for the computation of $A$ (see also \cite{kovacic:1986,singer:2011, chen-kauers-singer:2012, minchenko-ovchinnikov-singer:2013b, chen-etal:2013}). By the classification result of \cite[Prop.~11]{cassidy:1972}, $B$ is completely described by a finite set of linear differential operators $\mathbf{p}_1,\dots,\mathbf{p}_s\in F[\Pi]$, and it is shown in \cite[\S2]{dreyfus:2011} that they satisfy $\mathbf{p}_i(\eta^{-2})\in \delta_x(L)$ for $1\leq i\leq s$. When $A\subseteq\mathbb{G}_m(F^\Pi)$, it is proved in \cite{minchenko-ovchinnikov-singer:2013a} that, since $M$ is of finite algebraic transcendence degree over $L$, there are bounds on the orders of the $\mathbf{p}_i$. But if $A\nsubseteq \mathbb{G}_m(F^\Pi)$ and $B\neq 0$, then $M$ is of infinite algebraic transcendence degree over $L$ (see \cite{arreche:2013,minchenko-ovchinnikov-singer:2013a}). We are not aware of \emph{a priori} bounds on the orders of the $\mathbf{p}_i$ in this case (cf. \cite[p. 13]{minchenko-ovchinnikov-singer:2013a}), which raises the problem of deciding whether all the $\mathbf{p}_i$ have already been found, or whether it is still necessary to do more prolongations (see \cite{ovchinnikov:2009,minchenko-ovchinnikov-singer:2013a,minchenko-ovchinnikov-singer:2013b}).

In the setting of one parametric derivation $\Pi=\{\partial\}$, the problem of computing $R_u(H)$ was solved completely in \cite{arreche:2013}. When $H/R_u(H)$ is not $\partial$-constant, it is proved in \cite[Lem. 3.6(2)]{hardouin-singer:2008} that either $R_u(H)=\mathbb{G}_a(F)$ or $R_u(H)=0$, and there are no other possibilities. This has the consequence, counterintuitive in light of \cite{minchenko-ovchinnikov-singer:2013a}, that the computation of $R_u(H)$ is actually easier when $H/R_u(H)$ is \emph{not} $\partial$-constant (see \cite[Thm. 3.2]{arreche:2013}), since in this case the parametric derivation $\partial$ is barred from appearing in the defining equations for $R_u(H)$. Theorem~\ref{main1} describes the generalization of this phenomenon to the the setting of several parametric derivations.


\subsection{Computation of the unipotent radical}

Let $\mathcal{D}:=F\cdot\Pi$, the $F$-linear span of $\Pi$. Consider the set $\mathcal{L}$ of derivations $\partial\in \mathcal{D}$ such that every element of $A\simeq H/R_u(H)$ is constant with respect to $\partial$: \begin{equation} \label{lie}\mathcal{L}:=\{\partial\in\mathcal{D} \ | \ \partial a=0, \ \forall a \in A\}.\end{equation} A computation shows that $\mathcal{L}$ is a \emph{Lie subspace} of $\mathcal{D}$, i.e., an $F$-subspace that is closed under the Lie bracket on derivations. By \cite[Prop. 39]{cassidy:1972} and \cite[Prop. 0.6]{kolchin:1985}, there exists a commuting $F$-basis $\Pi':=\{\partial_1',\dots,\partial_k'\}$ for $\mathcal{L}$.

Now let $\Delta':=\{\delta_x\}\cup\Pi'$, and consider $K$ as a $\Delta'$-field. Then, the $\Delta'$-field $M':=K\langle\eta,\xi\rangle_{\Delta'}$ is a $\mathrm{PPV}$-extension of $K$ for \eqref{unimodular}, and a $\Delta'$-subfield of $M$. We identify the $\mathrm{PPV}$-group $H':=\mathrm{Gal}_{\Delta'}(M'/K)$ with a $\Pi'$-subgroup of $\mathrm{SL}_2(F)$ by means of the same basis $\{\eta,\xi\}$, and define $A'$ and $B'$ as in \eqref{borel}.


\begin{rem} \label{piprime} Let us briefly describe how to compute $\Pi'$. Observe that, for every $\partial\in\mathcal{D}$ and $\sigma\in H$, \begin{equation}\label{piprime1}\sigma\bigl(\tfrac{\partial\eta}{\eta}\bigr)=\tfrac{\partial\eta}{\eta}+\tfrac{\partial a_\sigma}{a_\sigma}\qquad\quad\text{and}\qquad\quad\delta_x\bigl(\tfrac{\partial\eta}{\eta}\bigr)=\partial u.\end{equation} Hence, the parameterized Galois correspondence implies that \begin{equation}\label{piprime2}\mathcal{L}=\bigl\{\partial\in\mathcal{D} \ \big| \ \tfrac{\partial\eta}{\eta}\in K \bigr\}=\{\partial\in\mathcal{D} \ | \ \partial u\in\delta_x(K)\}.\end{equation} By writing a derivation $\partial=\sum_i c_i\partial_i$ with undetermined coefficients, and applying Hermite reduction to $\partial u$ (see \cite{chen-kauers-singer:2012, chen-etal:2013}), the condition $\partial u\in\delta_x(K)$ becomes an $F$-linear condition on the coefficients $c_i$. Thus the computation of a (possibly non-commuting) basis $\Pi''$ for $\mathcal{L}$ is reduced to linear algebra. The proof of \cite[Prop.~0.6]{kolchin:1985} gives an algebraic recipe to produce a commuting basis $\Pi'$ for $\mathcal{L}$ from the (possibly non-commuting) basis $\Pi''$. This recipe was generalized and applied towards an algorithm to decide isomonodromy in \cite{gor-ov:2012}. Corollary~\ref{emptyprime} below, together with \cite[Lem.~4.3]{arreche:2013}, gives a simple and effective test to decide whether or not $R_u(H)\simeq\mathbb{G}_a(F)$.\end{rem}


\begin{thm}[Main result] \label{main1} The reductive quotient $H'/R_u(H')$ is $\Pi'$-constant, and the defining operators $\{\mathbf{p}_i\}_{i=1}^s \subset F[\Pi']$ for $R_u(H')$ are also defining operators for $R_u(H)$, under the natural inclusion $F[\Pi']\subseteq F[\Pi]$.
\end{thm}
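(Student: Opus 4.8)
The plan is to establish the two assertions in turn, exploiting the description of $\mathcal{L}$ given in Remark~\ref{piprime} and the structure \eqref{borel} of $H$ and $H'$ as groups of upper-triangular matrices in $\mathrm{SL}_2(F)$. For the first assertion, recall that $H'/R_u(H') \simeq A' \leq \mathbb{G}_m(F)$, where $A'$ is the image of the cocycle $\sigma \mapsto a_\sigma$ for $\sigma \in H' = \mathrm{Gal}_{\Delta'}(M'/K)$. I would show that every $a \in A'$ is $\Pi'$-constant. By \eqref{piprime1} (now read for $\partial' \in \Pi'$), for each such $\partial'$ and each $\sigma \in H'$ we have $\sigma\bigl(\tfrac{\partial'\eta}{\eta}\bigr) - \tfrac{\partial'\eta}{\eta} = \tfrac{\partial' a_\sigma}{a_\sigma}$; since $\partial' \in \mathcal{L}$, the element $\tfrac{\partial'\eta}{\eta}$ lies in $K$ by \eqref{piprime2}, hence is fixed by every $\sigma \in H'$, forcing $\tfrac{\partial' a_\sigma}{a_\sigma} = 0$, i.e. $a_\sigma \in F^{\Pi'}$. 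Thus $A' \subseteq \mathbb{G}_m(F^{\Pi'})$ and $H'/R_u(H')$ is $\Pi'$-constant.

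For the second assertion I would compare the defining operators of $B \simeq R_u(H)$ and $B' \simeq R_u(H')$. From Dreyfus' analysis summarized in \S\ref{dreyfus}, $B$ is described by operators $\mathbf{p}_1,\dots,\mathbf{p}_s \in F[\Pi]$ characterized by $\mathbf{p}_i(\eta^{-2}) \in \delta_x(L)$, where $L$ is the $\mathrm{PPV}$-field for \eqref{reductive}; analogously $B'$ is described by operators in $F[\Pi']$ satisfying the same condition with $L' := K\langle\eta\rangle_{\Delta'}$ in place of $L$. The key point is a \emph{descent of the defining operators into} $F[\Pi']$: I would argue that, although $B$ \emph{a priori} could require operators in $F[\Pi]$ involving derivations outside $\mathcal{L}$, in fact the group $B$ — being a differential algebraic subgroup of $\mathbb{G}_a(F)$ normalized by the torus $A \not\subseteq \mathbb{G}_m(F^\Pi)$ when $\mathcal{L} \subsetneq \mathcal{D}$ — is constrained by this normalization. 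Concretely, conjugation by $\mathrm{diag}(a, a^{-1})$ sends $b \mapsto a^2 b$, so $B$ is stable under multiplication by $a^2$ for all $a \in A$; since for $\partial \notin \mathcal{L}$ there is some $a \in A$ with $\partial a \neq 0$, the Leibniz rule applied to $\partial(a^2 b) = a^2\partial b + 2a(\partial a) b$ shows that if a defining operator for $B$ genuinely involved $\partial$ then $B$ would fail to be stable under this scaling unless the lower-order terms conspire — and one shows they cannot, exactly as in the single-derivation case treated in \cite[Lem.~3.6]{hardouin-singer:2008}. This forces every defining operator of $B$ to lie in $F[\mathcal{L}] \cap F[\Pi] = F[\Pi']$. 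The strengthenings promised in the introduction (Lemma~\ref{surj-res1} and Lemma~\ref{surj-res2}) presumably supply precisely the surjectivity/restriction statements needed to make this argument work with several derivations rather than one.

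Having shown that $R_u(H)$ is defined over $F[\Pi']$, it remains to identify its defining operators with those of $R_u(H')$. For this I would use that $M' = K\langle\eta,\xi\rangle_{\Delta'} \subseteq M$ and that $L' \subseteq L$ is the $\Pi'$-reduct; the condition $\mathbf{p}(\eta^{-2}) \in \delta_x(L')$ versus $\mathbf{p}(\eta^{-2}) \in \delta_x(L)$ must be shown equivalent for $\mathbf{p} \in F[\Pi']$. Since $H'/R_u(H')$ is $\Pi'$-constant, $L'$ is a $\mathrm{PV}$-extension (hence of finite transcendence degree) and one can transfer the membership condition between $L$ and $L'$ using the Galois correspondence together with the fact that $\mathrm{Gal}_{\Delta'}(M'/L') \simeq B'$ and $\mathrm{Gal}_\Delta(M/L) \simeq B$ are cut out by the same operators in $F[\Pi']$. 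I expect the \textbf{main obstacle} to be this last descent step — verifying that restricting to the smaller derivation set $\Pi'$ does not lose any of the differential-algebraic relations that define $B$, equivalently that the map $H \to H'$ induced by restriction is ``unipotent-radical-preserving'' on defining equations. This is where one genuinely needs the technical surjectivity results (Lemmas~\ref{surj-res1}, \ref{surj-res2}) generalizing the $m=1$ arguments of \cite{arreche:2013}, rather than the soft Galois-correspondence bookkeeping, which is routine.
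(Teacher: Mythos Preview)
Your proposal is correct and follows essentially the same route as the paper: the $\Pi'$-constancy of $A'$ is deduced from Remark~\ref{piprime} exactly as you do, and then $B=B'$ is established via Lemma~\ref{inj-res} ($B\subseteq B'$ by restriction), Lemma~\ref{surj-res1} (your $A$-stability/Leibniz argument, made precise using Sit's results on semi-invariant characteristic sets to place the defining operators of $B$ in $F[\Pi']$), and Lemma~\ref{surj-res2} (descent of the condition $\mathbf{p}(\eta^{-2})\in\delta_x(L)$ to $\delta_x(L')$, which is indeed the hard step and is carried out by a partial-fraction decomposition in $\eta$ over an intermediate field together with Kolchin--Ostrowski). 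Your diagnosis of where the real work lies is accurate; the only point you understate is that the descent in Lemma~\ref{surj-res1} requires not just $A$-stability of $B$ but the nontrivial fact (from \cite{sit:1975}) that one may \emph{choose} a characteristic set of semi-invariant operators, since an arbitrary generating set need not consist of semi-invariants even though the ideal is $A$-stable.
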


\begin{proof}That $A'$ is $\Pi'$-constant follows from Remark~\ref{piprime}: since $\frac{\partial \eta}{\eta}\in K$ for each $\partial\in\Pi'$, we have that $\frac{\partial a}{a}=0$ for each $a\in A'$. We will prove that $B=B'$ in a series of lemmas. By Lemma~\ref{inj-res}, we have that $B\subseteq B'$. By Lemma~\ref{surj-res1}, there is a finite set $\{\mathbf{p}_i\}_{i=1}^s\subset F[\Pi']$ such that $B$ coincides with the set of those $b\in \mathbb{G}_a(F)$ such that $\mathbf{p}_i(b)=0$ for each $1\leq i \leq s$. By Lemma~\ref{surj-res2}, $\mathbf{p}_i(b')=0$ for each $b'\in B'$ and $1\leq i \leq s$, whence $B'\subseteq B$. \end{proof}

\begin{cor}[cf. \protect{\cite[Lem.~3.6(2)]{hardouin-singer:2008}}]\label{emptyprime} Suppose that $R_u(H)\neq \{ 0 \}$. Then, $$R_u(H)\simeq\mathbb{G}_a(H) \quad\qquad\Longleftrightarrow \quad\qquad\mathcal{L}=\{0\}.$$\end{cor}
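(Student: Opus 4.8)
The plan is to deduce Corollary~\ref{emptyprime} directly from Theorem~\ref{main1} together with the classification of differential algebraic subgroups of $\mathbb{G}_a(F)$ in \cite[Prop.~11]{cassidy:1972}. Since $\Pi'$ is by construction a basis for the Lie subspace $\mathcal{L}$, the condition $\mathcal{L}=\{0\}$ is equivalent to $\Pi'=\emptyset$, which in turn means $\Delta'=\{\delta_x\}$ and so $H'$ is the ordinary (non-parameterized) $\mathrm{PV}$-group of \eqref{unimodular}. The unipotent radical $R_u(H')=B'$ is then a $\delta_x$-constant (equivalently, algebraic) subgroup of $\mathbb{G}_a(F)$, hence either $0$ or all of $\mathbb{G}_a(F)$; this is the only place one uses that $F[\Pi']=F[\emptyset]=F$ forces the defining operators of $B'$ to be scalars. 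By Theorem~\ref{main1}, $R_u(H)=B=B'$ is cut out by the same operators, so $R_u(H)$ is $0$ or $\mathbb{G}_a(F)$ as well.

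The two directions then fall out cleanly. For the implication $\mathcal{L}=\{0\}\Rightarrow R_u(H)\simeq\mathbb{G}_a(F)$: assuming $R_u(H)\neq\{0\}$, the previous paragraph shows $R_u(H)$ must be all of $\mathbb{G}_a(F)$. For the converse $R_u(H)\simeq\mathbb{G}_a(F)\Rightarrow\mathcal{L}=\{0\}$: I would argue by contraposition. If $\mathcal{L}\neq\{0\}$, pick $0\neq\partial'\in\Pi'\subseteq\mathcal{L}$. Then $\partial'$ annihilates every element of $A\simeq H/R_u(H)$, and via the description \eqref{piprime2} one has $\frac{\partial'\eta}{\eta}\in K$. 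The key point is that $\partial'$ must then show up among the defining operators of $B=R_u(H)$: the order-one operator $\partial'$ (or a suitable one built from the $\partial'_j$) annihilates $B$. Concretely, since $\mathbf{p}_i(\eta^{-2})\in\delta_x(L)$ for the defining operators and $\partial'\eta/\eta\in K$, one checks that $\partial'(\eta^{-2})=-2\tfrac{\partial'\eta}{\eta}\eta^{-2}$ already lies in $\delta_x(L)$ modulo the relation $\delta_x(\xi/\eta)=\eta^{-2}$, forcing $\partial'$ into the defining ideal of $B$; hence $B\subsetneq\mathbb{G}_a(F)$, so $R_u(H)\not\simeq\mathbb{G}_a(F)$.

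I expect the main obstacle to be making the converse direction airtight, specifically verifying that a nonzero $\partial'\in\mathcal{L}$ genuinely forces $B\neq\mathbb{G}_a(F)$ rather than merely being compatible with $B=\mathbb{G}_a(F)$. The cleanest route is probably to invoke the machinery already assembled in the proof of Theorem~\ref{main1}: by Lemma~\ref{surj-res1} the operators $\{\mathbf{p}_i\}\subset F[\Pi']$ define $B$, and if $B=\mathbb{G}_a(F)$ then every $\mathbf{p}_i$ kills all of $\mathbb{G}_a(F)$, which by \cite[Prop.~11]{cassidy:1972} means each $\mathbf{p}_i$ is the zero operator; but then $R_u(H')=B'=\mathbb{G}_a(F)$ is not $\delta_x$-constant unless $\Pi'=\emptyset$, contradicting $\partial'\in\Pi'$. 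In fact, once one knows (as in the hyperexponential case, cf.\ Remark~\ref{piprime}) that $B\simeq\mathbb{G}_a(F)$ whenever $R_u(H)$ is ``as large as possible'', the statement reduces to the tautology that $\Pi'=\emptyset$ iff $\mathcal{L}=\{0\}$; I would cite \cite[Lem.~4.3]{arreche:2013} to close this last gap if the bare classification of $\mathbb{G}_a$-subgroups does not immediately suffice.
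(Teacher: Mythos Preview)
Your forward direction ($\mathcal{L}=\{0\}\Rightarrow R_u(H)\simeq\mathbb{G}_a(F)$) matches the paper's argument exactly: when $\Pi'=\emptyset$, $H'$ is the classical $\mathrm{PV}$-group, $B'$ is an \emph{algebraic} subgroup of $\mathbb{G}_a(F)$, and since $B\subseteq B'$ (Lemma~\ref{inj-res}) with $B\neq\{0\}$, Theorem~\ref{main1} gives $B=B'=\mathbb{G}_a(F)$.

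The converse, however, has a genuine gap. Your first attempt asserts that $\partial'(\eta^{-2})=-2\tfrac{\partial'\eta}{\eta}\,\eta^{-2}$ lies in $\delta_x(L)$ ``modulo the relation $\delta_x(\xi/\eta)=\eta^{-2}$.'' But $\xi/\eta\notin L$, so that relation cannot produce an antiderivative in $L$; and even in $M$ one has $\delta_x\bigl(-2v\cdot\tfrac{\xi}{\eta}\bigr)=-2(\delta_x v)\tfrac{\xi}{\eta}-2v\eta^{-2}$, which equals $-2v\eta^{-2}$ only when $\delta_x v=0$, a condition that $v=\tfrac{\partial'\eta}{\eta}\in K$ need not satisfy. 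There is no \emph{a priori} reason why a single $\partial'\in\Pi'$ must annihilate $B$.

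Your second attempt correctly reduces the question to showing $B'\neq\mathbb{G}_a(F)$ whenever $\Pi'\neq\emptyset$, but then stalls: the sentence ``$B'=\mathbb{G}_a(F)$ is not $\delta_x$-constant unless $\Pi'=\emptyset$'' is not meaningful ($\mathbb{G}_a(F)$ is a perfectly good $\Pi'$-differential algebraic group for any $\Pi'$), and \cite[Prop.~11]{cassidy:1972} by itself imposes no obstruction to $B'=\mathbb{G}_a(F)$. This is precisely the nontrivial step. The paper handles it by invoking structural results of Singer: by Theorem~\ref{main1}, $A'\simeq H'/R_u(H')$ is $\Pi'$-constant, and then \cite[Prop.~3.3]{singer:2011} (when $A'$ is finite) or the argument of \cite[pp.~159--160]{singer:2011} (when $A'$ is infinite, hence $A'=\mathbb{G}_m(F^{\Pi'})$ by \cite[\S IV.1]{cassidy:1972}) forces $B'\neq\mathbb{G}_a(F)$. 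The reference \cite[Lem.~4.3]{arreche:2013} does not supply this; what is needed is the fact that a $\Pi'$-constant reductive quotient with $\Pi'\neq\emptyset$ constrains the unipotent radical to be a \emph{proper} $\Pi'$-subgroup of $\mathbb{G}_a(F)$.
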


\begin{proof} If $\mathcal{L}=\{ 0\}$, then $H'$ is the $\mathrm{PV}$-group corresponding to \eqref{unimodular}, and $B'$ is an algebraic subgroup of $\mathbb{G}_a(F)$. By Lemma~\ref{inj-res}, $B'\neq\{ 0\}$, because $B\subseteq B'$ and $B\neq\{0\}$. Therefore, $B'=\mathbb{G}_a(F)=B$, by Theorem~\ref{main1}.

Supposing instead that $\mathcal{L}\neq \{0 \}$, we have that $\Pi'\neq\emptyset$. If $A'$ is finite, the fact that $B'\neq \mathbb{G}_a(F)$ follows from \cite[Prop.~3.3]{singer:2011}. If $A'$ is infinite, since $A'$ is $\Pi'$-constant by Theorem~\ref{main1}, the classification of \cite[\S IV.1]{cassidy:1972} says that $A'=\mathbb{G}_m(F^{\Pi'})$. That $B'\neq\mathbb{G}_a(F)$ follows from \cite[pp.~159--160]{singer:2011} in this case (see also \cite[proof of Prop.~4.4]{arreche:2013} and \cite[Alg.~1]{minchenko-ovchinnikov-singer:2013a}). By Theorem~\ref{main1}, $B\neq \mathbb{G}_a(F)$. \end{proof}

The following three lemmas were used in the proof of Theorem~\ref{main1}.


\begin{lem} \label{inj-res} The restriction homomorphism $H\hookrightarrow H':\sigma\mapsto\sigma|_{M'}$ induces an inclusion $R_u(H)\hookrightarrow R_u(H')$. \end{lem}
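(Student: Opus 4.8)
Here is my plan for proving Lemma~\ref{inj-res}.

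\medskip

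The plan is to work entirely through the parameterized Galois correspondence, translating the statement about unipotent radicals into a statement about the tower of $\mathrm{PPV}$-fields $K\subseteq L\subseteq M$ and its restriction to the $\Delta'$-subfield $M'$. Recall from \S\ref{dreyfus} that $R_u(H)$ is identified with $\mathrm{Gal}_\Delta(M/L)$, where $L$ is the $\mathrm{PPV}$-field for $\delta_xY-uY=0$ inside $M$; and that $B\simeq R_u(H)$ sits inside $H$ as the subgroup of matrices $\left(\begin{smallmatrix}1 & b\\ 0 & 1\end{smallmatrix}\right)$. First I would set $L':=K\langle\eta\rangle_{\Delta'}\subseteq M'$, which is a $\mathrm{PPV}$-field over $K$ for \eqref{reductive} with respect to $\Delta'$, so that by the same structural results $R_u(H')\simeq B'\simeq\mathrm{Gal}_{\Delta'}(M'/L')$ and $A'\simeq\mathrm{Gal}_{\Delta'}(L'/K)$. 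The restriction map $\sigma\mapsto\sigma|_{M'}$ is a well-defined injective homomorphism $H\hookrightarrow H'$ precisely because $M'$ is a $\mathrm{PPV}$-field over $K$ for \eqref{unimodular} whose solution space is the same $F$-span $\mathcal{S}=\langle\eta,\xi\rangle_F$; injectivity holds since an element of $H$ fixing $\eta$ and $\xi$ fixes all of $M=K\langle\eta,\xi\rangle_\Delta$. In matrix terms, under the common basis $\{\eta,\xi\}$ this restriction map is simply the identity on matrix entries: $\sigma$ and $\sigma|_{M'}$ have the same $a_\sigma,b_\sigma$.

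\medskip

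With this setup the inclusion $R_u(H)\hookrightarrow R_u(H')$ is almost tautological at the level of abstract groups: under the restriction embedding $H\hookrightarrow H'$, the image of $B=R_u(H)\subseteq H$ consists of matrices $\left(\begin{smallmatrix}1 & b\\ 0 & 1\end{smallmatrix}\right)$ that also lie in $H'$, hence in $B'=R_u(H')$ (the unipotent upper-triangular elements of $H'$). What actually requires a small argument is that this inclusion is the ``right'' one, i.e. that it is compatible with the identification of $R_u(H)$ and $R_u(H')$ as $\Delta$- (resp. $\Delta'$-) subgroups via the $B,B'$ pictures, which is what the later lemmas (and the statement $B\subseteq B'$ used in the proof of Theorem~\ref{main1}) rely on. For this I would argue via the Galois correspondence: if $\sigma\in H$ fixes $L$ pointwise, then in particular $\sigma$ fixes $\eta$, so $a_\sigma=1$; hence $\sigma|_{M'}$ fixes $L'=K\langle\eta\rangle_{\Delta'}\subseteq L$ pointwise, so $\sigma|_{M'}\in\mathrm{Gal}_{\Delta'}(M'/L')\simeq B'$. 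Thus restriction carries $\mathrm{Gal}_\Delta(M/L)$ into $\mathrm{Gal}_{\Delta'}(M'/L')$, which is exactly the asserted inclusion $R_u(H)\hookrightarrow R_u(H')$, and on matrix entries it is $b\mapsto b$, so $B\subseteq B'$ as subsets of $\mathbb{G}_a(F)$.

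\medskip

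The one point that needs genuine verification, rather than formal manipulation, is that $M'=K\langle\eta,\xi\rangle_{\Delta'}$ is in fact a $\mathrm{PPV}$-extension of $K$ for \eqref{unimodular} with respect to $\Delta'=\{\delta_x\}\cup\Pi'$ — in particular that it satisfies the constants condition $(M')^{\delta_x}=K^{\delta_x}=F$. This is inherited from $M$: since $M'\subseteq M$ and $M^{\delta_x}=F$, we get $(M')^{\delta_x}\subseteq F\subseteq M'$, so the condition holds; the $F$-linear independence of $\eta,\xi$ and the generation condition are immediate. Similarly one must note that $L'\subseteq L$, which is clear since $L'=K\langle\eta\rangle_{\Delta'}$ and $L=K\langle\eta\rangle_{\Delta}\supseteq K\langle\eta\rangle_{\Delta'}$. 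I expect this bookkeeping — checking that dropping derivations from $\Pi$ to $\Pi'$ preserves the $\mathrm{PPV}$-axioms for each field in the tower — to be the main (though still routine) obstacle; once it is in place the inclusion of unipotent radicals follows directly from the Galois correspondence as above.
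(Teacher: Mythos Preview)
Your proposal is correct and follows essentially the same approach as the paper: both argue that $H\hookrightarrow H'$ is injective because the two groups act on the common solution space $\mathcal{S}=F\cdot\eta\oplus F\cdot\xi$, and that under this embedding the upper-triangular unipotent elements of $H$ land in those of $H'$. Your write-up is considerably more detailed than the paper's (which dispatches the second step as ``clear from the definitions''), in particular in verifying that $M'$ satisfies the $\mathrm{PPV}$-axioms and in phrasing the inclusion via the Galois correspondence with $L'\subseteq L$, but there is no substantive difference in strategy.
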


\begin{proof} The actions of $H$ and $H'$ on $M$ and $M'$ are completely determined by their restrictions to the same solution space $\mathcal{S}=F\cdot\eta\oplus F\cdot\xi$, whose definition is independent of the chosen set of parametric derivations. Hence, the restriction homomorphism $H\hookrightarrow H'$ is injective, and it is clear from the definitions that $R_u(H)$ is then mapped (injectively) into $R_u(H')$. \end{proof}


The fact that $B$ is the unipotent radical of \eqref{borel}, and not just any differential algebraic subgroup of $\mathbb{G}_a(F)$, allows us to produce a set of defining operators for $B$ from $F[\Pi']\subseteq F[\Pi]$, which sharpens the classification result of \cite[Prop.~11]{cassidy:1972} in this very particular case. The following structural result, which was inspired by the results of \cite{sit:1975} cited in its proof, holds true for any linear differential algebraic group $G$ of the form \eqref{borel}, whether or not it happens to be a $\mathrm{PPV}$-group over $K$.

\begin{lem}[cf. \protect{\cite[Lem. 3.6(2)]{hardouin-singer:2008}, \cite[Thm. II.1.3 and Thm. II.1.4]{sit:1975}}] \label{surj-res1} There exist finitely many linear differential operators $\mathbf{p}_1,\dots,\mathbf{p}_s\in F[\Pi']\subseteq F[\Pi]$ such that $$B=\{b\in F \ | \ \mathbf{p}_i(b)=0, \ 1\leq i \leq s\}.$$\end{lem}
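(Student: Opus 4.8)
The plan is to exploit the fact that $G$ as in \eqref{borel} is a specific kind of subgroup of the Borel of $\mathrm{SL}_2$: it is the full set of matrices $\begin{pmatrix} a & b \\ 0 & a^{-1}\end{pmatrix}$ with $a\in A$ and $b\in B$, where $A\leq\mathbb{G}_m(F)$ and $B\leq\mathbb{G}_a(F)$ are differential algebraic subgroups. First I would recall, from the classification of differential algebraic subgroups of $\mathbb{G}_a(F)$ in \cite[Prop.~11]{cassidy:1972}, that $B$ is cut out by a single linear differential operator $\mathbf{p}\in F[\Pi]$ (or is all of $\mathbb{G}_a$, the case $\mathbf{p}=0$, or is $\{0\}$, which is trivial). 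The whole content is therefore to show that $\mathbf{p}$ may be chosen with coefficients in the smaller ring $F[\Pi']$; equivalently, that the $F[\Pi]$-submodule of $F$ generated by $B$ — i.e. the solution space of $\mathbf{p}$ — is already defined by operators that do not involve any $\partial\in\mathcal{D}\setminus\mathcal{L}$.

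The mechanism that forces this is the conjugation action of the torus part $A$ on the unipotent part $B$ inside $G$: for $a\in A$ one has $\begin{pmatrix} a & 0 \\ 0 & a^{-1}\end{pmatrix}\begin{pmatrix} 1 & b \\ 0 & 1\end{pmatrix}\begin{pmatrix} a^{-1} & 0 \\ 0 & a\end{pmatrix}=\begin{pmatrix} 1 & a^2 b \\ 0 & 1\end{pmatrix}$, so that $B$ is stable under multiplication by $a^2$ for every $a\in A$. Hence $B$ is a module over the $F$-subalgebra of $F$ generated by $\{a^2 : a\in A\}$, and in particular, since $A$ is the reductive quotient and the $\partial$-logarithmic derivatives $\partial a/a$ for $\partial\in\mathcal{D}$ detect exactly the failure of $A$ to be $\partial$-constant, differentiating the relation $b\mapsto a^2 b$ produces, for each $\partial\notin\mathcal{L}$, an $F$-linear combination of $\partial b$ and $b$ lying in $B$ whenever $b\in B$. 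This is precisely the input needed to invoke the structure theorems \cite[Thm.~II.1.3 and Thm.~II.1.4]{sit:1975} for differential algebraic subgroups of $\mathrm{SL}_2$, which describe how the defining operators of the unipotent part of such a group are constrained by the torus part; the upshot I want to extract from \emph{loc.\ cit.} is that the defining operator $\mathbf{p}$ of $B$ can be taken to factor through, or be reduced modulo, the operators witnessing $A\subseteq\mathbb{G}_m(F^\Pi)$-ness in each direction, leaving an operator in $F[\Pi']$. I would also use Lemma~\ref{inj-res} and the description \eqref{piprime2} of $\mathcal{L}$ in terms of $\partial\eta/\eta\in K$ to make the bookkeeping concrete: for $\partial\in\Pi'$ the element $\partial\eta/\eta$ lies in $K$, so conjugation by $A$ does not introduce genuinely new transcendentals in the $\partial$-direction, which is what pins the coefficients of $\mathbf{p}$ into $F[\Pi']$.

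Concretely, the steps are: (1) write down the single operator $\mathbf{p}\in F[\Pi]$ defining $B$ from \cite[Prop.~11]{cassidy:1972}; (2) compute the conjugation action of $A$ on $B$ and differentiate it along an arbitrary $\partial\in\mathcal{D}$, obtaining that $\partial b-2\tfrac{\partial a}{a}\,b\in B$ for $b\in B$, $a\in A$; (3) observe that for $\partial\notin\mathcal{L}$ there is $a\in A$ with $\partial a/a\neq 0$, and moreover — using that $A$ is a differential algebraic subgroup of $\mathbb{G}_m$ and $\mathcal{L}$ is its "constant locus" of derivations — that the operator $\partial-2\tfrac{\partial a}{a}$ acts on $B$, so $B$ is annihilated by the $F[\Pi]$-ideal generated by $\mathbf{p}$ together with all such $\partial$; (4) conclude via the $\mathrm{SL}_2$ classification of \cite{sit:1975} that this ideal is generated by operators in $F[\Pi']$, so that replacing $\mathbf{p}_1,\dots,\mathbf{p}_s$ by those generators gives the claimed description $B=\{b\in F \mid \mathbf{p}_i(b)=0,\ 1\leq i\leq s\}$ with $\mathbf{p}_i\in F[\Pi']$. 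The main obstacle I anticipate is step (4): extracting from the $\mathrm{SL}_2$ structure theorems of \cite{sit:1975} the precise statement that the defining operators of the unipotent part descend to $F[\Pi']$ requires matching Sit's parametrization of these groups (which is phrased for a single derivation, or at least in a form not immediately adapted to a several-derivation Lie-subspace condition) against the present $\mathcal{L}$-vs-$\mathcal{D}$ dichotomy. Care will be needed to verify that the reduction can be performed simultaneously in all the "bad" directions $\partial\in\mathcal{D}\setminus\mathcal{L}$ without the operators in different directions interfering, and that the resulting finite set still lies in the commutative ring $F[\Pi']$ rather than merely in the $F$-span of monomials in $\Pi'$ — this uses that $\Pi'$ is a \emph{commuting} basis of $\mathcal{L}$.
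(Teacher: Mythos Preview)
Your instinct that the conjugation action of $A$ on $B$ is what forces the defining operators of $B$ into $F[\Pi']$ is correct, and it is also the paper's starting point. But step~(3) as written does not hold: from $A\cdot B=B$ (which is what the group law gives; conjugation gives $A^2\cdot B=B$, a weaker statement) you cannot conclude that $\partial b-2\tfrac{\partial a}{a}\,b\in B$. The set $B$ is not closed under applying $\partial$ to its elements, so ``differentiating the relation $b\mapsto a^2b$'' does not produce a new element of $B$. What $A$-stability actually yields is a constraint at the level of \emph{operators}: for each $a\in A$ the operator $Y\mapsto\mathbf{p}(aY)$ again vanishes on $B$. The precise statement the paper extracts from \cite[Thm.~II.1.3(b), Thm.~II.1.4]{sit:1975} is that one may choose a characteristic set $\{\mathbf{p}_1,\dots,\mathbf{p}_s\}$ for the defining ideal of $B$ satisfying the semi-invariance $\mathbf{p}_i(aY)=a\,\mathbf{p}_i(Y)$ for every $a\in A$. (This also fixes your step~(1): with several derivations Cassidy's classification gives a finite set of operators, not a single one.)

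What remains --- your step~(4), which you rightly flag as the obstacle --- is then not an abstract descent but a concrete leading-term computation, and this is the idea missing from your sketch. One first extends $\Pi'$ to a commuting basis $\tilde{\Pi}=\{\partial_1',\dots,\partial_m'\}$ of all of $\mathcal{D}$ (via \cite[Prop.~0.7]{kolchin:1985}) and fixes an orderly ranking on $F\{Y\}_{\tilde{\Pi}}$ in which the derivations outside $\Pi'$ dominate. If some $\mathbf{p}\in F[\tilde{\Pi}]$ contained a monomial $c_\theta\theta Y$ with $\theta$ genuinely involving a derivation $\partial_\ell'\notin\Pi'$, take the one of highest rank; then the leader of $\mathbf{p}(aY)-a\,\mathbf{p}(Y)$ is $c_\theta\,(\partial_\ell' a)\,\tilde{\theta}Y$, where $\tilde{\theta}$ is $\theta$ with the order of $\partial_\ell'$ lowered by one. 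Since $\partial_\ell'\notin\mathcal{L}$ there exists $a\in A$ with $\partial_\ell' a\neq 0$, contradicting the semi-invariance. Hence each $\mathbf{p}_i$ already lies in $F[\Pi']$. Replace your step~(3) by Sit's semi-invariance and supply this ranking argument, and the proof goes through.
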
 

\begin{proof}By \cite[Prop. 0.7]{kolchin:1985} the $F$-basis $\Pi'$ for $\mathcal{L}$ can be extended to a commuting $F$-basis $\tilde{\Pi}:=\{\partial_1',\dots,\partial_m'\}$ for all of $\mathcal{D}$. We denote by $\tilde{\Theta}$ (resp., $\Theta'$) the free commutative monoid generated by $\tilde{\Pi}$ (resp., $\Pi'$). Consider the orderly ranking on $F\{Y\}_{ \tilde{\Pi}}$ determined by the lexicographic order on $\tilde{\Theta}$ defined by setting $\delta_i'\leq\delta_j'$ if $i\leq j$. In other words, to compare two elements $\theta,\theta'$ in $\tilde{\Theta}$, we first compare their total orders, and then the exponents of $\partial'_m,\dots,\partial_1'$, in that order.

By \cite[Thm. II.1.3(b) and Thm. II.1.4]{sit:1975}, there is a characteristic set $\{\mathbf{p}_1,\dots,\mathbf{p}_s\}$ for the defining ideal of $B$ (with respect to this ranking) such that $\mathbf{p}_i(aY)=a\mathbf{p}_i(Y)$ for each $a\in A$ and $1\leq i\leq s$. Therefore, to show that $\{\mathbf{p}_i\}_{i=1}^s\subset F[\Pi']$, it suffices to prove that if $\mathbf{p}\in F[\tilde{\Pi}]$ does not belong to the image of $F[\Pi']$ under the natural embedding $F[\Pi']\subseteq F[\Pi]$, then there exists an element $a\in A$ such that $\mathbf{p}(aY)-a\mathbf{p}(Y)\neq 0$. 

So suppose that $\mathbf{p}\in F[\tilde{\Pi}]$ and $\mathbf{p}\notin F[\Pi']$, and let $c_\theta\theta Y$ be the monomial in $\mathbf{p}$ of highest rank such that $c_\theta\neq 0$ and $\theta$ contains a derivation $$\partial'\in\tilde{\Pi}\backslash\Pi'=\{\partial_{k+1}',\dots,\partial_m'\}.$$ Assume that $\partial_\ell'$ is the derivation of highest rank appearing effectively in $\theta$, and let $\tilde{\theta}$ denote the element of $\tilde{\Theta}$ obtained from $\theta$ by decreasing the order of $\partial_\ell'$ by $1$. Since $\theta'(aY)=a\theta'Y$ for every $a\in A$ and $\theta'\in \Theta'$, the leader of $\mathbf{p}(aY)-a\mathbf{p}(Y)$ is $c_\theta\partial_\ell'(a)\tilde{\theta}Y$ whenever $a\in A$. Since $\partial_\ell'\notin\mathcal{L}$, there exists an element $a\in A$ such that $\partial_\ell'(a)\neq 0$, whence $\mathbf{p}(aY)-a\mathbf{p}(Y)\neq 0$. \end{proof}

\begin{rem} \label{surj1-rem} When $A$ is $\Pi$-constant, we may take $\Pi'=\Pi$, and Lemma~\ref{surj-res1} coincides with \cite[Prop.~11]{cassidy:1972}. In the case that $\Pi=\{\partial\}$ is a singleton, Lemma~\ref{surj-res1} specializes to \cite[Lem.~3.6(2)]{hardouin-singer:2008}.
\end{rem}


The previous result shows that $B$ can be defined as a subset of $\mathbb{G}_a(F)$ using derivations from $\Pi'$ only. The following result rules out the possibility that $B$ could somehow be defined by more $\Pi'$-differential equations than $B'$.

\begin{lem}[cf. \protect{\cite[Prop. 3.6(2)]{cassidy-singer:2006}}] \label{surj-res2} If $\mathbf{p} \in F[\Pi']$ is such that $\mathbf{p}(b)=0$ for every $b\in B$, then $\mathbf{p}(b')=0$ for every $b'\in B'$. In other words, $B\subseteq B'$ is $\Pi'$-dense.
\end{lem}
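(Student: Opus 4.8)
The plan is to show that $B \subseteq B'$ is $\Pi'$-dense by arguing contrapositively through the parameterized Galois correspondence, exactly as in the classical case \cite[Prop.~3.6(2)]{cassidy-singer:2006} that the statement is modeled on. Suppose $\mathbf{p} \in F[\Pi']$ annihilates every element of $B$. The linear differential polynomial $\mathbf{p}(Z)$ defines a $\Pi'$-algebraic subgroup $B_{\mathbf{p}} := \{z \in \mathbb{G}_a(F) \mid \mathbf{p}(z) = 0\}$ of $\mathbb{G}_a(F)$, and the hypothesis says $B \subseteq B_{\mathbf{p}}$; I want to conclude $B' \subseteq B_{\mathbf{p}}$. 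Since $B' = \mathrm{Gal}_{\Delta'}(M'/L')$ where $L'$ is the $\mathrm{PPV}$-subfield of $M'$ corresponding to the reductive quotient (the $\Delta'$-field generated over $K$ by $\eta$), the subgroup $B' \cap B_{\mathbf{p}} \leq B'$ is $\Pi'$-algebraic and corresponds under the parameterized Galois correspondence over $L'$ to an intermediate $\Delta'$-field $L' \subseteq N' \subseteq M'$. The task reduces to showing $N' = L'$, i.e., that no proper $\Delta'$-extension of $L'$ inside $M'$ is fixed by $B' \cap B_{\mathbf{p}}$.

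The key computational input is the action formula: writing $t := \xi/\eta$, one has $\sigma(t) = t + b_\sigma/a_\sigma$ for $\sigma \in H'$ (or directly $\sigma(\xi) = a_\sigma^{-1}\xi + b_\sigma\eta$), so that the subgroup $B' \cap B_{\mathbf{p}}$ acts on $M'$ through the additive parameter $b$, and $M'$ is generated over $L'$ by $t$ together with all its $\Delta'$-derivatives. I would next translate the hypothesis $B \subseteq B_{\mathbf{p}}$ into information about $B'$ using Lemma~\ref{inj-res} and the fact established in Lemma~\ref{surj-res1} that $B$ is itself cut out inside $\mathbb{G}_a(F)$ by operators from $F[\Pi']$. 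The cleanest route: by Lemma~\ref{surj-res1} there are $\mathbf{p}_1, \dots, \mathbf{p}_s \in F[\Pi']$ with $B = \bigcap_i \ker \mathbf{p}_i$ as subgroups of $\mathbb{G}_a(F)$; since $\mathbf{p}$ vanishes on $B$, differential-algebra over the $\Pi'$-closed field $F$ (Ritt--Raudenbush, or the explicit description of $\Pi'$-subgroups of $\mathbb{G}_a$ in \cite[Prop.~11]{cassidy:1972}) gives that $\mathbf{p}$ lies in the radical $\Pi'$-ideal generated by the $\mathbf{p}_i$ in $F\{Z\}_{\Pi'}$, equivalently $\ker \mathbf{p} \supseteq \bigcap_i \ker \mathbf{p}_i = B$ at the level of $F$-points with the correct multiplicities. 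The point is then that $B'$, being $\Pi'$-dense in nothing a priori but being defined inside $\mathbb{G}_a(F)$ by some operators from $F[\Pi']$ (since $A'$ is $\Pi'$-constant, Lemma~\ref{surj-res1} applies to $H'$ as well, yielding $B' = \bigcap_i \ker \mathbf{p}'_i$ with $\mathbf{p}'_i \in F[\Pi']$), must contain $B$, and one checks that the containment of $B$ in the $\Pi'$-algebraic set $\ker \mathbf{p}$ forces the same for $B'$ because $B$ and $B'$ generate the same $\Pi'$-ideal.

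Here is the more direct way I would actually write it. By Lemma~\ref{inj-res}, $B \subseteq B'$ as differential algebraic subgroups of $\mathbb{G}_a(F)$. Both $B$ and $B'$ are $\Pi'$-algebraic: for $B$ this is Lemma~\ref{surj-res1}, and for $B'$ it follows from the same Lemma applied with $\Pi$ replaced by $\Pi'$, since the reductive quotient $A'$ is $\Pi'$-constant (already shown in the proof of Theorem~\ref{main1}). Now I invoke the fact that $B' \subseteq \mathbb{G}_a(F)$ coincides, under the parameterized Galois correspondence for $M'/L'$, with $\mathrm{Gal}_{\Delta'}(M'/L')$, and that $M' = L'\langle t \rangle_{\Delta'}$. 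Suppose for contradiction $\mathbf{p}(b') \neq 0$ for some $b' \in B'$. Consider the $\Delta'$-subfield $N' := L'\langle \mathbf{p}(t) \rangle_{\Delta'}$ of $M'$, where I abuse notation by writing $\mathbf{p}(t)$ for the result of applying $\mathbf{p} \in F[\Pi']$ to $t$; since $\sigma(t) = t + c_\sigma$ with $c_\sigma := b_\sigma/a_\sigma \in \mathbb{G}_a(F)$ and $\mathbf{p}$ is $F$-linear and commutes with the $\Pi'$-action up to the $\Pi'$-constancy of $A'$, one gets $\sigma(\mathbf{p}(t)) = \mathbf{p}(t) + \mathbf{p}(c_\sigma)$. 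The hypothesis says $\mathbf{p}(c_\sigma) = 0$ for all $\sigma \in H$, i.e.\ for all $\sigma$ in the image of the (injective) restriction $H \hookrightarrow H'$; thus the stabilizer of $N'$ in $H'$ contains the image of $H$. But the image of $H$ in $H'$ is $\Pi'$-dense in $H'$ — this is precisely the content of the classical density argument: $H$ restricted to $M'$ satisfies exactly the same $\Pi'$-differential equations over $K$ as $H'$, because $M'$ is the $\Pi'$-closure inside $M$ of the data $\langle \eta, \xi\rangle$ and a $\Delta'$-automorphism of $M'$ fixing $K$ extends the restriction of some $\Delta$-automorphism of $M$. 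Hence $N'$ is fixed by all of $H'$, so $N' = K$-$\Delta'$-inside $L'$, forcing $\mathbf{p}(t) \in L'$; but $\delta_x \mathbf{p}(t) = \mathbf{p}(\delta_x t) = \mathbf{p}(\eta^{-2})$ and, running the Galois correspondence in the other direction, $\mathbf{p}(t) \in L'$ together with $\mathbf{p}$ not annihilating $B'$ yields the contradiction, since $B' = \mathrm{Gal}_{\Delta'}(M'/L')$ acts non-trivially on $\mathbf{p}(t)$ exactly when $\mathbf{p}(c_\sigma) \neq 0$ for some $\sigma \in H'$.

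The main obstacle I anticipate is making precise and rigorous the claim that \emph{the image of $H$ in $H'$ is $\Pi'$-dense}, which is the $\Pi'$-analogue of \cite[Prop.~3.6(2)]{cassidy-singer:2006} (Zariski-density of the $\mathrm{PPV}$-group in the $\mathrm{PV}$-group) and the real heart of the matter. The clean statement is that $M'$ is a $\mathrm{PPV}$-extension of $K$ for \eqref{unimodular} with respect to $\Delta'$ (asserted in the text just before Remark~\ref{piprime}), so $H' = \mathrm{Gal}_{\Delta'}(M'/K)$ is \emph{defined over $F$ by $\Pi'$-equations extracted from the defining $\Pi'$-equations of $H$ restricted to $M'$}; concretely, any $\Pi'$-polynomial relation over $K$ holding among the entries of the matrices in the image of $H \to \mathrm{GL}_2(F)$ also holds for $H'$, because those relations are detected inside $M' \subseteq M$ and $M'$ contains a full set of solutions with the $\delta_x$-constants unchanged. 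I would isolate this as the one nontrivial point, cite \cite[Prop.~3.6(2)]{cassidy-singer:2006} and \cite[\S8.1]{gill-gor-ov:2012} for the mechanism, and verify the hypotheses of that mechanism in the present $\Delta'$-setup; everything else (the explicit cocycle computation $\sigma(\mathbf{p}(t)) = \mathbf{p}(t) + \mathbf{p}(c_\sigma)$, the two invocations of the parameterized Galois correspondence) is bookkeeping.
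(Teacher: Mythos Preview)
Your approach---reducing the lemma to the $\Pi'$-Kolchin density of the image of $H$ inside $H'$---is sound and genuinely different from the paper's. The paper proceeds computationally: it translates the hypothesis via \cite[\S2.1]{dreyfus:2011} into $\mathbf{p}(\eta^{-2})\in\delta_x(L)$, and then spends about a page of field theory (writing $L=N(\eta)$ with $N=K(\beta_1,\dots,\beta_S)$, a partial-fraction decomposition of a $\delta_x$-antiderivative $f\in L$ as a rational function of $\eta$ over $\bar N$, Kolchin--Ostrowski, and the $\mathrm{PV}$-ring criterion \cite[Cor.~1.38]{vanderput-singer:2003}) to force $f\in L'$, i.e.\ $\mathbf{p}(\eta^{-2})\in\delta_x(L')$. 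Your route is shorter and more conceptual; the paper's is explicit and self-contained, not relying on a $\Pi'$-density statement that is not already in the cited literature.

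That said, two points in your write-up need repair. First, the cocycle formula is wrong: with $t=\xi/\eta$ one has $\sigma(t)=a_\sigma^{-2}t+a_\sigma^{-1}b_\sigma$, not $t+b_\sigma/a_\sigma$, so $\sigma(\mathbf{p}(t))=a_\sigma^{-2}\mathbf{p}(t)+a_\sigma^{-1}\mathbf{p}(b_\sigma)$ (using that $a_\sigma$ is $\Pi'$-constant). Hence $\mathbf{p}(t)$ is \emph{not} fixed by the image of $H$, only by its unipotent part, and the argument through $N'=L'\langle\mathbf{p}(t)\rangle_{\Delta'}$ as you wrote it does not go through. Second, your justification of density (``a $\Delta'$-automorphism of $M'$ fixing $K$ extends the restriction of some $\Delta$-automorphism of $M$'') is backwards and false in general. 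The correct---and short---proof of density is pure Galois correspondence: if $\bar H$ is the $\Pi'$-Kolchin closure of the image of $H$ in $H'$, then $(M')^{\bar H}\subseteq (M')^{H|_{M'}}=M'\cap M^{H}=M'\cap K=K$, whence $\bar H=H'$ by \cite[Thm.~3.5]{cassidy-singer:2006}. Once you have this, skip $\mathbf{p}(t)$ entirely: the assignment $\sigma\mapsto\mathbf{p}(b_\sigma)$ is a $\Pi'$-differential polynomial in the matrix entries vanishing on the image of $H$ (since $\{b_\sigma:\sigma\in H\}=B$ by \eqref{borel}), hence on all of $H'$, hence on $B'$.
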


\begin{proof} Suppose that $\mathbf{p}\in F[\Pi']$ is such that $\mathbf{p}(b)=0$ for each $b\in B$. Then by \cite[\S2.1, p. 7]{dreyfus:2011}, we have $\mathbf{p}(\eta^{-2})\in \delta_x(L)$. Moreover, since $\mathbf{p}\in F[\Pi']$, $$\mathbf{p}(\eta^{-2})\in K\langle\eta\rangle_{\Delta'}=:L',$$ the fixed field of $R_u(H')$. We will show that in fact $\mathbf{p}(\eta^{-2})\in\delta_x(L')$. Again by \cite[\S2.1, p. 7]{dreyfus:2011}, this will imply that $\mathbf{p}(b')=0$ for each $b'\in B'$. Assume that $\eta$ is algebraically transcendental over $K$, since otherwise $A\simeq\mu_k$, the group of $k^\text{th}$ roots of unity (see \cite[Prop.~31]{cassidy:1972}), whence $\Pi=\Pi'$ and there is nothing to show.
 
By \cite[Prop. 3.9]{cassidy-singer:2006} (cf. Remark~\ref{piprime}), the fact that $A'$ is $\Pi'$-constant implies that \begin{equation}\label{aprimeconst}v_j:=\tfrac{\partial_j'\eta}{\eta}\in K\end{equation} for each $\partial_j'\in\Pi'$, and therefore $L'=K(\eta)$. It also follows from \eqref{aprimeconst} that \begin{equation}\label{veejay}-2v_j=\eta^2\partial_j'(\eta^{-2})\in K.\end{equation} Let us prove by induction that $\eta^2\theta'(\eta^{-2})\in K$ for each $\theta'\in\Theta'$, the free commutative monoid on $\Pi'$. The base case is \eqref{veejay}. Assuming that $\eta^2\theta'(\eta^{-2})=:v_{\theta'}\in K$, then $$\eta^2\partial_j'\theta'(\eta^{-2})=\eta^2\partial_j'(v_{\theta'}\eta^{-2})=\partial_j'v_{\theta'}-2v_jv_{\theta'}\in K$$ proves the induction step, and our claim. Hence, $\eta^2\mathbf{p}(\eta^{-2})\in K$ for every $\mathbf{p}\in F[\Pi']$. 

Since $$L:=K\langle\eta\rangle_\Delta=K(\eta)\langle\partial_1\eta,\dots,\partial_m\eta\rangle_\Delta=K(\eta)\bigl\langle\tfrac{\partial_1\eta}{\eta},\dots,\tfrac{\partial_m\eta}{\eta}\bigr\rangle_\Delta,$$ it follows that $L$ is algebraically generated as a field extension of $L'=K(\eta)$ by \begin{equation}\label{generators}\bigl\{\theta\tfrac{\partial_j\eta}{\eta} \ | \ \theta\in\Theta, \ 1\leq j \leq m\bigr\},\end{equation} where $\Theta$ is the free commutative monoid on $\Pi$. By \cite[Cor.~5.1.2]{minchenko-ovchinnikov-singer:2013b} and \cite[Prop.~3.2]{minchenko-ovchinnikov-singer:2013a} (see also \cite[\S3.2.1]{minchenko-ovchinnikov-singer:2013a}), if we consider $L$ and $K$ as $\delta_x$-fields, then $L$ is a (non-parameterized) $\mathrm{PV}$-extension of $K$, and the algebraic transcendence degree of $L$ over $K$ is finite. Hence, we may choose a finite set $\beta_1,\dots,\beta_S$ of $F$-linearly independent generators for $L$ over $L'$ from the set \eqref{generators}. It follows from~\eqref{piprime1} that $\delta_x\beta_i\in K$ for each $1\leq i \leq S$. By the Kolchin-Ostrowski theorem \cite{kolchin:1968}, the $\beta_i$ are then algebraically independent over $L'$. We define $$ N:=K(\beta_1,\dots,\beta_S),$$ and observe that $L= N(\eta)$. Since $A$ is abelian, the subgroup $\mathrm{Gal}_\Delta(L/ N)\leq A$ is normal, and therefore $ N$ is a $\mathrm{PPV}$-extension of $K$ by the parameterized Galois correspondence \cite[Thm.~3.5]{cassidy-singer:2006}. Again by \cite[Cor.~5.1.2]{minchenko-ovchinnikov-singer:2013b} and \cite[Prop.~3.2]{minchenko-ovchinnikov-singer:2013a}, the $\delta_x$-field $ N$ is a (non-parameterized) $\mathrm{PV}$-extension of the $\delta_x$-field $K$. Since $$\tfrac{\delta_x\eta}{\eta}=u\in K\quad\qquad \text{and}\quad\qquad \delta_x\beta_i\in K$$ for each $1\leq i \leq S$, the Kolchin-Ostrowski theorem \cite{kolchin:1968} implies that $\eta$ is algebraically transcendental over $ N$. The corresponding $\mathrm{PV}$-ring is \begin{equation}\label{pv-ring}P:=K[\beta_1,\dots,\beta_S]\subset  N.\end{equation}

Let $f\in L$ such that $\delta_x(f)=\mathbf{p}(\eta^{-2})$. We claim that there exist elements $g\in N$ and $c\in F$ such that $f=g\eta^{-2}+c$. To see this, let $h\in K$ be such that $\mathbf{p}(\eta^{-2})=h\eta^{-2}$, and write the partial fraction decomposition of $f$ considered as a rational function in $\eta$, where the coefficients $c_i$, $e_k$, $g_{j,k}\in \bar{N}$ belong to some algebraic closure $\bar{N}$ of $ N$: \begin{equation}\label{parfrac1} \sum_i c_i\eta^i+\sum_{j,k}\frac{g_{j,k}}{(\eta-e_k)^j}=f.\end{equation} Let $e_0=0$, and apply $\delta_x$ on both sides of \eqref{parfrac1} to obtain (cf. \cite[Lem.~2.1]{arreche:2012}): \begin{equation}\label{parfrac2} \sum_i (\delta_xc_i +iuc_i)\eta^i+\sum_{j,k} \frac{\delta_xg_{j,k}-jug_{j,k}}{(\eta-e_k)^j}+\frac{jg_{j,k}(\delta_xe_k-ue_k)}{(\eta-e_k)^{j+1}}  =\delta_xf=\mathbf{p}(\eta^{-2})=h\eta^{-2}.\end{equation} Comparing coefficients in \eqref{parfrac2} shows that $\delta_xc_0=0$ and that $\delta_xc_i=-iuc_i$, which implies that either $c_i=a\eta^i$ for some $a\in F$, or else $c_i=0$. In any case $c_0\in F$, snd since $\eta$ is algebraically transcendental over $ N$, $c_i=0$ for $i>0$. Now fix $k>0$, so that $e_k\neq 0$, and let $j>0$ be the smallest integer such that $g_{j,k}\neq 0$. Again comparing coefficients in \eqref{parfrac2}, we obtain that $\delta_xg_{j,k}=jug_{j,k}$, which implies that $g_{j,k}=a\eta^j$ for some $0\neq a\in F$. This is impossible, and therefore there is no such $j$, and only $k=0$ appears in the sum \eqref{parfrac1}. Finally, since $e_0=0$, we obtain that $\delta_xg_{j,0}=jug_{j,0}$ for $j\neq 2$ by comparing coefficients in \eqref{parfrac2}, which again implies that $g_{j,0}=0$ whenever $j\neq 2$. Therefore, $$f=g_{2,0}\eta^{-2}+c_0,$$ where $c_0\in F$ and $g_{2,0}\in\bar{N}$ is \emph{algebraic} over $ N$. Since $$g_{2,0}=\eta^2(f-c_0)\in L= N(\eta),$$ the fact that $\eta$ is algebraically transcendental over $ N$ implies that $g_{2,0}\in N$.

Having shown that $f=g\eta^{-2}+c$ for some $g\in N$ and $c\in F$, let us now show that the element $g$ actually belongs to $K$, which implies that $f\in L'$. Since $$\bigl(\delta_xg-2ug\bigr)\eta^{-2}=\delta_xf=\mathbf{p}(\eta^{-2})=h \eta^{-2}$$ for some $h\in K$, it follows that \begin{equation}\label{beta-rel}\delta_xg-2ug=h.\end{equation} We begin by showing that $g\in K[\beta_1,\dots,\beta_S]$ must be a polynomial expression in the $\beta_i$. Indeed, it follows from \eqref{beta-rel} that the $K$-vector space $\sum_jK\cdot\delta_x^jg\subset  N$ is finite-dimensional over $K$. By \cite[Cor. 1.38]{vanderput-singer:2003}, the finite-dimensionality of $\sum_jK\cdot\delta_x^jg$ over $K$ is a necessary and sufficient condition for $g\in N$ to belong to the $\mathrm{PV}$-ring $P$ defined in \eqref{pv-ring}. 

To show that $g\in K$, we proceed by contradiction. Suppose that $r_I\underline{\beta}^I$ is a monomial in $g$, considered as a polynomial in the $\beta_i$, with $0\neq |I|$ maximal and $0\neq r_I\in K$. Since the coefficient of $\underline{\beta}^I$ in the right-hand side of \eqref{beta-rel} is $0$, we see that $\delta_xr_I=2ur_I$, which implies that $r_I=a\eta^2$ for some $a\in F^\times$. Since $\eta^2\notin K$, no such monomial $r_I\underline{\beta}^I$ appears in $g$, which means that $g\in K$. Therefore, $g\eta^{-2}+c=f\in L'$, which concludes the proof of the Lemma. \end{proof}

\begin{rem} \label{surj2-rem} In case $\Pi'=\emptyset$, or in other words when the Lie subspace $\mathcal{L}$ defined in \eqref{lie} is $\{0\}$, then $H'$ is the (non-parameterized) $\mathrm{PV}$-group for \eqref{unimodular}, and Lemma~\ref{surj-res2} reduces to a special case of \cite[Prop.~3.6(2)]{cassidy-singer:2006}.
\end{rem}


\section{An example}\label{examples}

We let $K=F(x)$ denote the $\Delta$-field defined in the previous section, where $\Pi:=\{\partial_1,\partial_2\}$, $\partial_j:=\tfrac{\partial}{\partial t_j}$ for $j=1,2$, and $F$ denotes a $\Pi$-closed field containing $\mathbb{Q}(t_1,t_2)$ (see \cite{kolchin:1974,trushin:2010}). In this section, we will apply Theorem~\ref{main1} to compute the $\mathrm{PPV}$-group $H$ corresponding to the parameterized linear differential equation \begin{equation}\label{eg1} \delta_x^2Y-\left(\frac{x^2+(2-2t_1t_2)x+t_1^2t_2^2-3t_1t_2+2}{x^2}\right)Y=0.\end{equation} The Riccati equation \begin{gather*} \delta_xu+u^2=\tfrac{x^2+(2-2t_1t_2)x+t_1^2t_2^2-3t_1t_2+2}{x^2}=:q \intertext{admits the solution} u=\frac{t_1t_2-1-x}{x}. \end{gather*} Therefore, by \cite{kovacic:1986} there is a basis $\{\eta,\xi\}$ for the solution space of \eqref{eg1} such that $\delta_x\eta=u\eta$ and $\delta_x\bigl(\frac{\xi}{\eta}\bigr)=\eta^{-2}$, and by \cite[\S2.1]{dreyfus:2011} there exist differential algebraic subgroups $A\leq\mathbb{G}_m(F)$ and $B\leq \mathbb{G}_a(G)$ such that $H$ is given by \eqref{borel}. Since $\partial_1u=\tfrac{t_2}{x}$ and $\partial_2u=\tfrac{t_1}{x},$ we have that \begin{equation}\label{ynot} \partial_1^2u=0=\partial_2^2u \quad\qquad\text{and}\quad\qquad t_1\partial_1u-t_2\partial_2u=0.\end{equation} Hence the $F[\Pi]$-span of $\{\partial_1u,\partial_2u\}$ and the $F$-span of $\{\partial_1u,\partial_2u,\partial_1\partial_2u\}$ are the same modulo $\delta_x(K)$. By \cite[\S2.1]{dreyfus:2011}, $$A=\bigl\{a\in\mathbb{G}_m(F) \ \big| \ t_1\tfrac{\partial_1a}{a}=t_2\tfrac{\partial_2a}{a}; \ \partial_1\bigl(\tfrac{\partial_1a}{a}\bigr)=0=\partial_2\bigl(\tfrac{\partial_2a}{a}\bigr)\bigr\}.$$

Since $\partial_1u\notin\delta_x(K)$, the Lie subspace of derivations $\mathcal{L}\subset F\cdot \Pi$ defined in \eqref{lie}, or equivalently in \eqref{piprime2}, has dimension at most $1$ over $F$. Hence, by \eqref{ynot} $\mathcal{L}$ coincides with $F\cdot (t_1\partial_1-t_2\partial_2)$, the $F$-span of $\partial_1':=t_1\partial_1-t_2\partial_2$. Hence, we may take $\Pi':=\{\partial_1'\}$.

By Theorem~\ref{main1}, to compute the unipotent radical $R_u(H)=B$ it suffices to compute the unipotent radical $R_u(H')=:B'$, where $H'$ denotes the $\mathrm{PPV}$-group of \eqref{eg1} relative to the new set of parametric derivations $\Pi'=\{\partial_1'\}$. It follows from \eqref{ynot} that the system \begin{equation}\label{eg-pv} \begin{cases} \delta_xY-uY=0 \\ \partial_1'Y=0\end{cases}\end{equation} is \emph{isomonodromic} \cite{gor-ov:2012} (or \emph{completely integrable}, in the terminology of \cite[Defn.~3.8]{cassidy-singer:2006}). Therefore, by \cite[Prop.~3.9]{cassidy-singer:2006} $L'=K(\eta)$ is a (non-parameterized) $\mathrm{PV}$-extension of $K$ for \eqref{eg-pv}, and (cf. Theorem~\ref{main1}) $$\mathrm{Gal}_{\Delta'}(L'/K)\simeq H'/R_u(H')\simeq A'=\mathbb{G}_m(F^{\partial_1'}).$$ 

By \cite[Lem.~4.3]{arreche:2013} and \cite[Prop.~2.6(2)]{cassidy-singer:2006}, to see that $B'\neq 0$, it suffices to show that there is no $f\in K$ such that $\delta_xf+2uf=1$. We prove this by contradiction, along the lines of \cite[proof of Cor.~3.3]{arreche:2013}. Assume that $f\in K$ satisfies \begin{equation}\label{bnot0} \delta_xf+2uf=1.\end{equation} First, note that $f$ cannot be $\delta_x$-constant, whence it must a have a pole somewhere in $\mathbb{P}^1(F)$. But $f$ cannot have a pole outside of $\{0,\infty\}$, for otherwise the left-hand side of \eqref{bnot0} would have a pole. If $f$ had a pole at $0$, the residue of $2u$ at $0$ would have to be an integer, which is clearly false. Therefore, $f$ can only have a pole at $\infty$, i.e., $f$ is a polynomial in $x$. Moreover, $f$ must be divisible by $x$, because otherwise the left-hand side of \eqref{bnot0} would have a pole at $0$. But then the degree of the polynomial on the left-hand side of \eqref{bnot0} is equal to the degree of $f$, which is at least $1$, since $f$ is not constant. This contradiction concludes the proof that there is no solution in $K$ for \eqref{bnot0}, and therefore that $B'\neq 0$. Since $$\delta_x\bigl(\partial_1'\tfrac{\xi}{\eta}\bigr)=\partial_1'\eta^{-2}=0 \qquad\Longrightarrow \qquad\partial_1'\tfrac{\xi}{\eta}\in F=\delta_x(F\cdot x)\subset \delta_x(K),$$  it follows from \cite[\S2.1, p.~7]{dreyfus:2011} that $B'=\mathbb{G}_a(F^{\partial_1'})$. Therefore, by Theorem~\ref{main1}, $$H\simeq\left\{\begin{pmatrix} a & b \\ 0 & a^{-1} \end{pmatrix} \ \middle| \ \begin{matrix}a,b \in F; \quad a\neq 0; \quad t_1\tfrac{\partial_1a}{a}=t_2\tfrac{\partial_2a}{a}; \\ \\ \partial_1\bigl(\tfrac{\partial_1a}{a}\bigr)=0=\partial_2\bigl(\tfrac{\partial_2a}{a}\bigr); \quad t_1\partial_1b=t_2\partial_2b\end{matrix}\right\}.$$

\bibliographystyle{spmpsci}

 \end{document}